\newcolumntype{^}{>{\currentrowstyle}}
\journal{arxiv}
\newtheorem{lemma}{Lemma}
\newtheorem{theorem}{Theorem}
\newtheorem{remark}{Remark}
\newcommand{\proof}{\medskip\noindent{\bf Proof.~}}
\begin{document}
\renewcommand{\abstractname}{Abstract}
\renewcommand{\refname}{References}
\renewcommand{\tablename}{Figure.}
\renewcommand{\arraystretch}{0.9}
\thispagestyle{empty}
\sloppy

\begin{frontmatter}
\title{Generalised dual Seidel switching and Deza graphs with strongly regular children}

\author[01,02]{Vladislav~V.~Kabanov\corref{cor1}}
\cortext[cor1]{Corresponding author}
\ead{vvk@imm.uran.ru}

\author[02,03]{Elena~V.~Konstantinova}
\ead{e\_konsta@math.nsc.ru}

\author[01,02,04]{Leonid~Shalaginov}
\ead{44sh@mail.ru}

\address[01]{Krasovskii Institute of Mathematics and Mechanics, S. Kovalevskaja st. 16, Yekaterinburg, 620990, Russia}
\address[02]{Sobolev Institute of Mathematics, Ak. Koptyug av. 4, Novosibirsk, 630090, Russia}
\address[03]{Novosibisk State University, Pirogova str. 2, Novosibirsk, 630090, Russia}
\address[04]{Chelyabinsk State University, Brat'ev Kashirinyh st. 129, Chelyabinsk,  454021, Russia}

\begin{abstract}
A Deza graph $G$ with parameters $(n,k,b,a)$ is a $k$-regular graph with $n$ vertices such that any two  distinct vertices have $b$ or 
$a$ common neighbours, where $b \geqslant a$. The children $G_A$ and $G_B$ of a Deza graph $G$ are defined on the vertex set of $G$ such that every two distinct vertices are adjacent in $G_A$ or $G_B$ if and only if they have $a$ or $b$ common neighbours, respectively. In this paper we present a general approach to dual Seidel switching and investigate Deza graphs whose children  are strongly regular graphs.
\end{abstract}

\begin{keyword} dual Seidel switching \sep Deza graph \sep strictly Deza graph \sep strongly regular graph  
\vspace{\baselineskip}
\MSC[2010] 05C50\sep 05E10\sep 15A18
\end{keyword}
\end{frontmatter}

\section{Introduction}\label{sec0}
A Deza graph $G$ with parameters $(n,k,b,a)$ is a $k$-regular connected graph of order $n$ for which the number of common neighbours of two distinct vertices takes just two  values, $b$ or $a$, where $b\geqslant a$. The graph was introduced in~\cite{D94}, and the name was given in~\cite{EFHHH}, where the basics of Deza graph theory were founded and different constructions of Deza graphs were presented. Moreover, it was shown that Deza graphs can be considered in terms of matrices. Suppose $G$ is a graph with $n$ vertices, and $M$ is its adjacency matrix. Then $G$ is a Deza graph with parameters $(n,k,b,a)$ if and only if 
\begin{equation}
M^2=a~A+b~B+k~I\label{child}
\end{equation}
for some symmetric $(0,1)$-matrices $A$, $B$ such that $A+B+I=J$, where $J$ is the all--ones matrix and $I$ is the identity matrix. Note that $G$ is a strongly regular graph if and only if $A$ or $B$ is $M$. 

Suppose that we have a Deza graph with $M$, $A$, and $B$ satisfying~(\ref{child}). Then $A$ and $B$ are adjacency matrices of graphs, and the corresponding graphs $G_A$ and $G_B$ are called  the {\it children} of $G$. If $G$ is a strongly regular graph, then its children are $G$ and its complement. A Deza graph of diameter two which is not a strongly regular graph is called a {\it strictly Deza graph}. A strongly regular graph is called~\emph{imprimitive} if it, or its complement, is disconnected, and \emph{primitive} otherwise. Deza graphs whose children are imprimitive strongly regular graphs are known as divisible design graph~\cite{HKM}.

The~\emph{dual Seidel switching}~\cite{Hem,JS} was used in~\cite{EFHHH} for constructing (strictly) Deza graphs from strongly regular graphs. We call an involutive automorphism of a graph as a \emph{Seidel automorphism} if it interchanges only non-adjacent vertices. 

\begin{theorem}\label{dual}~{\rm{\bf (Dual Seidel switching)}}~{\rm \cite[  Theorem 3.1]{EFHHH}} 
Let $G$ be a strongly regular graph with parameters $(n,k,\lambda,\mu)$, where $k\neq \mu$, $\lambda \neq \mu$. Let $M$ be the adjacency matrix of $G$, and $P$ be a non-identity permutation matrix of the same size. Then $PM$ is the adjacency matrix of a Deza graph $H$ if and only if $P$ represents a Seidel automprphism. Moreover, $H$ is a strictly Deza graph if and only if $\lambda \neq 0$, $\mu \neq 0$.
\end{theorem}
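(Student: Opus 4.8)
The plan is to translate everything into the adjacency matrix $M$ of $G$ and the permutation matrix $P$ of the underlying permutation $\sigma$, and to run all the arguments through the strongly regular identity $M^2=\mu J+(\lambda-\mu)M+(k-\mu)I$ together with the trivial relations $P\mathbf{1}=\mathbf{1}$ and $PJ=JP=J$. At the very start I would record that the two restricted eigenvalues of $M$ have product $\mu-k$, so the hypothesis $k\neq\mu$ says exactly that $M$ is nonsingular; this is what will let me cancel $M$ below, whereas $\lambda\neq\mu$ will be used to isolate $M$ inside $M^2$.

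For the direction asserting that a Seidel automorphism produces a Deza graph, I would first use that an involutive automorphism satisfies $P=P^{-1}=P^{\top}$ and $PM=MP$, so that $(PM)^{\top}=M^{\top}P^{\top}=MP=PM$ and $PM$ is symmetric, while the requirement that $\sigma$ moves vertices only to non-adjacent ones is precisely $(PM)_{ii}=M_{\sigma(i),i}=0$; hence $PM$ is a genuine adjacency matrix. Then $(PM)^2=P(MP)M=P(PM)M=P^2M^2=M^2$, and writing $M^2=kI+\lambda M+\mu(J-I-M)$ exhibits symmetric $(0,1)$-matrices $A,B$ with $A+B+I=J$ of the form required by~(\ref{child}), so $PM$ is the adjacency matrix of a Deza graph.

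For the converse I would assume only that $PM$ is an adjacency matrix. Its symmetry gives $MP^{-1}=PM$, equivalently $PMP=M$, and from this $PM^2=(MP^{-1})M=M(MP)=M^2P$, so $P$ commutes with $M^2$. Since $P$ commutes with $J$ and $I$ as well, subtracting those parts from $M^2$ leaves $(\lambda-\mu)(PM-MP)=0$, and because $\lambda\neq\mu$ this yields $PM=MP$: thus $\sigma$ is an automorphism. Substituting $PM=MP$ back into $PMP=M$ gives $P^2M=M$, and the nonsingularity of $M$ forces $P^2=I$, so $\sigma$ is an involution; the vanishing diagonal of $PM$ says once more that $\sigma$ interchanges only non-adjacent vertices. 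The one point to appreciate here is that mere symmetry of $PM$ already forces commutation with $M^2$, after which the two numerical hypotheses peel off the automorphism and the involution properties in turn.

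For the final clause I would argue through the diameter. By the matrix characterisation recalled above, $H$ is strongly regular exactly when one of its two children equals $PM$; those children have matrices $M$ and $J-I-M$, and both $PM=M$ (cancel the nonsingular $M$) and $PM=J-I-M$ (apply $P$ and use $P^2=I$, $PJ=J$) force $P=I$. Hence for $P\neq I$ the graph $H$ is never strongly regular, and being strictly Deza is equivalent to having diameter two. Since $(PM)^2=M^2$, a pair non-adjacent in $H$ has exactly $\lambda$ common neighbours when it is adjacent in $G$ and $\mu$ when it is not; as $PM\neq M$ there is always an edge of $G$ that is a non-edge of $H$, and a short count (using that $G$ is not complete) produces a pair that is a non-edge of both. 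Diameter two therefore holds precisely when both of these witnesses still have a common neighbour, that is, if and only if $\lambda\neq0$ and $\mu\neq0$. I expect the construction of these two witnesses to be the fiddliest step, and the place where the degenerate strongly regular graphs are genuinely excluded.
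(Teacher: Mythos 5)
The paper itself contains no proof of this statement: it is imported verbatim as Theorem~3.1 of~\cite{EFHHH}, so your argument has to be judged on its own merits. Most of it is correct and well organised. The opening observation that $k\neq\mu$ is precisely nonsingularity of $M$ (the restricted eigenvalues multiply to $\mu-k$) is right, the forward direction ($P$ Seidel automorphism $\Rightarrow$ $PM$ symmetric, zero-diagonal, and $(PM)^2=M^2=kI+\lambda M+\mu(J-I-M)$) is complete, and the converse chain --- symmetry of $PM$ gives $PMP=M$, hence $P$ commutes with $M^2$, hence $\lambda\neq\mu$ forces $PM=MP$, hence nonsingularity of $M$ forces $P^2=I$ --- deploys the two numerical hypotheses exactly where they are needed. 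Likewise, reducing ``$H$ is never strongly regular'' to the two matrix equations $PM=M$ and $PM=J-I-M$, each of which collapses to $P=I$, is sound, since the children of $H$ are $M$ and $J-I-M$ because $(PM)^2=M^2$ and $\lambda\neq\mu$.

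The gap is in your second witness for the diameter argument. You need a pair of vertices non-adjacent in both $G$ and $H$ (so that $\mu=0$ destroys diameter two), and you propose to produce it by ``a short count (using that $G$ is not complete)''. No such count exists in general: $G$ and $H$ have the same number of non-edges, namely $\binom{n}{2}-nk/2$, and whenever $k\geqslant(n-1)/2$ two sets of that size can in principle be disjoint inside the $\binom{n}{2}$ pairs; strongly regular graphs satisfying all hypotheses of the theorem with $k\geqslant(n-1)/2$ do exist (e.g.\ the complement of $T(8)$, with parameters $(28,15,6,10)$), so an argument that only compares cardinalities cannot close this step. The repair is immediate and uses the Seidel automorphism itself rather than a count: pick any $u$ with $\sigma(u)\neq u$ (possible since $P\neq I$). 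Then $u\not\sim_G\sigma(u)$ because $\sigma$ interchanges only non-adjacent vertices, and $(PM)_{u,\sigma(u)}=M_{\sigma(u),\sigma(u)}=0$, so $u\not\sim_H\sigma(u)$ as well. A similar, though much smaller, remark applies to your first witness: ``$PM\neq M$'' by itself only yields a position where the two matrices differ, and you want the difference in the direction $M_{uv}=1$, $(PM)_{uv}=0$; this follows because $PM$ is a row permutation of $M$ and therefore has the same number of ones. With these two witnesses made explicit, your equivalence ``diameter two if and only if $\lambda\neq0$ and $\mu\neq0$'' is correct.
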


By Theorem~\ref{dual} we have $(PM)^2=M^2$, therefore there exist Deza graphs with primitive strongly regular children. On the conference ``Graphs and Groups, Design and Dynamics'', Yichang, China, August 2019,  W.~Haemers asked the first author 
 whether there exist other constructions or examples of Deza graphs with primitive strongly regular children. 


In this paper we present some constructions of Deza graphs with strongly regular children using a generalisation of the dual Seidel switching and other matrix  tools. 

The following significant connections between spectra of Deza graphs and spectra of their children were shown in~\cite{AGHKKS}. 

\begin{theorem}~{\rm \cite[Theorem 3.2]{AGHKKS}} \label{children_spec}
Let $G$ be a Deza graph with parameters $(n,k,b,a)$, $b>a$. Let $M,A,B$ be the adjacency matrices of $G$ and its children, respectively. If $\theta_1=k,\theta_2,\ldots,\theta_n$ are the eigenvalues of $M$, then 

\emph{(i)} The eigenvalues of $A$ are
  $$\alpha = \cfrac{b(n-1)-k(k-1)}{b-a},\ \alpha_2 =\cfrac{k-b-\theta_2^2}{b-a},\ 
  \ldots,\ \alpha_n = \cfrac{k-b-\theta_n^2}{b-a}.$$

\emph{(ii)} The eigenvalues of $B$ are
  $$\beta =  \cfrac{a(n-1)-k(k-1)}{a-b},\ \beta_2 = \cfrac{k-a-\theta_2^2}{a-b},\ 
  \ldots,\ \beta_n = \cfrac{k-a-\theta_n^2}{a-b}.$$
\end{theorem}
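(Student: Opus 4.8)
The plan is to diagonalize everything simultaneously. The key observation is that the defining equation $M^2 = aA + bB + kI$, together with $A + B + I = J$, lets me express both $A$ and $B$ as linear combinations of $M^2$, $I$, and $J$. Concretely, substituting $B = J - I - A$ into the matrix equation gives $M^2 = aA + b(J - I - A) + kI = (a-b)A + bJ + (k-b)I$, so I can solve for
\begin{equation*}
A = \frac{1}{a-b}\bigl(M^2 - bJ - (k-b)I\bigr) = \frac{1}{b-a}\bigl((k-b)I + bJ - M^2\bigr).
\end{equation*}
An entirely symmetric computation (substituting $A = J - I - B$) yields $B = \frac{1}{a-b}\bigl((k-a)I + aJ - M^2\bigr)$. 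Since $A$ and $B$ are polynomials in $M$ plus a multiple of $J$, and $G$ is regular, all three matrices $M$, $J$, and the $A$, $B$ built from them share a common eigenbasis, so I can read off the eigenvalues of $A$ and $B$ by acting on that basis.

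First I would handle the Perron eigenvector. Because $G$ is $k$-regular and connected, the all-ones vector $\mathbf{1}$ is an eigenvector of $M$ with eigenvalue $\theta_1 = k$, and it is simultaneously an eigenvector of $J$ with eigenvalue $n$. Plugging these into the formula for $A$ gives the eigenvalue $\alpha = \frac{1}{b-a}\bigl((k-b) + bn - k^2\bigr) = \frac{b(n-1) - k(k-1)}{b-a}$, matching statement (i); the analogous substitution into the $B$-formula produces $\beta = \frac{a(n-1) - k(k-1)}{a-b}$, matching (ii). Next I would treat the remaining eigenvectors. For each eigenvalue $\theta_i$ with $i \geqslant 2$, the corresponding eigenvector $v_i$ is orthogonal to $\mathbf{1}$ (since $M$ is symmetric and eigenvectors for distinct eigenvalues are orthogonal, and one should note the $\theta_i$ need not be distinct, but any eigenvector in the orthogonal complement of $\mathbf{1}$ works), hence $Jv_i = 0$. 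Acting on such a $v_i$, the $J$-term vanishes and the $M^2$-term contributes $\theta_i^2$, giving $\alpha_i = \frac{(k-b) - \theta_i^2}{b-a} = \frac{k - b - \theta_i^2}{b-a}$ and, symmetrically, $\beta_i = \frac{k - a - \theta_i^2}{a-b}$, exactly as claimed.

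The step requiring the most care is justifying that the eigenvectors of $M$ for $i \geqslant 2$ really do lie in $\ker J$ and that the count is right. This is clean when $M$ is symmetric with simple spectrum, but since eigenvalues may repeat I should argue at the level of eigenspaces: the eigenspace for $\theta_1 = k$ is spanned by $\mathbf{1}$ (by connectedness and the Perron--Frobenius theorem, $k$ is a simple eigenvalue), and every other eigenspace lies in $\mathbf{1}^{\perp} = \ker J$. Choosing an orthonormal eigenbasis of $M$ adapted to this decomposition, the formulas for $A$ and $B$ then transfer eigenvalue-by-eigenvalue, and because $A$ and $B$ are fixed polynomials in $M$ and $J$ restricted to each invariant subspace, this accounts for all $n$ eigenvalues without double counting. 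The only genuine arithmetic is the two substitutions, which are routine once the structural reduction $A = \frac{1}{b-a}\bigl((k-b)I + bJ - M^2\bigr)$ is in place.
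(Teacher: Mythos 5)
Your proof is correct. Bear in mind that the paper itself gives no proof of this theorem: it is imported verbatim from \cite{AGHKKS} (Theorem 3.2 there), so there is no in-paper argument to compare against. Your route --- solving the defining equations $M^2 = aA + bB + kI$ and $A + B + I = J$ to obtain $A = \frac{1}{b-a}\bigl((k-b)I + bJ - M^2\bigr)$ and $B = \frac{1}{a-b}\bigl((k-a)I + aJ - M^2\bigr)$, then evaluating on $\mathbf{1}$ and on an orthogonal eigenbasis of $\mathbf{1}^{\perp}$ --- is the standard one and coincides in substance with the proof in that reference; note also that the connectivity built into this paper's definition of a Deza graph is precisely what guarantees that $k$ is a simple eigenvalue with eigenvector $\mathbf{1}$, which your eigenspace bookkeeping relies on.
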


If $G$ is a strongly regular graph with parameters $(n,k,\lambda,\mu)$ and with the adjacency matrix $M$, then
\begin{equation}\label{M^2}
M^2 = kI+\lambda M + \mu (J-I-M).
\end{equation} 
It is known~\cite[Theorem 1.3.1]{BCN89} and \cite[Theorem 9.1.2]{BH} that $M$ has precisely two distinct restricted eigenvalues $r$ $(r\geq 0)$ and $s$ $(s\leq 0)$. If a Deza graph has strongly regular children, then we have the following theorem as a straightforward corollary of Theorem~\ref{children_spec}. 

\begin{theorem}\label{spec1}
Let $G$ be a Deza graph with parameters $(n,k,b,a)$, $b>a$. Let a Deza child $H$ of $G$ be a strongly regular graph with parameters $(n,k_H,\lambda,\mu)$, and $H$ has restricted eigenvalues $r$, $s$ with multiplicities $f$, $g$. Then the following statements hold.

\emph{(i)} $k_H$ is equal to $ \cfrac{b(n-1)-k(k-1)}{b-a}$ or $ \cfrac{k(k-1)-a(n-1)}{b-a}$.

\emph{(ii)}  $G$ has at most four restricted eigenvalues
$$\rho_{1,2} = \pm \sqrt{k-b-r(b-a)},\quad  \sigma_{1,2} = \pm \sqrt{k-b-s(b-a)}$$ with the multiplicities $f_1,f_2$ and $g_1,g_2$, respectively.

\emph{(iii)}  $f=f_1+f_2$, $g=g_1+g_2$.
\end{theorem}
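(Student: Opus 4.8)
The plan is to read all three parts off Theorem~\ref{children_spec} once the right commuting structure is in place. First I would record the algebraic hinge: eliminating $B$ from~(\ref{child}) through $B=J-I-A$ gives
\[
A=\frac{1}{b-a}\bigl(bJ+(k-b)I-M^2\bigr),
\]
so $A$ is a polynomial in $M$ and $J$. Because $G$ is connected and $k$-regular, $MJ=JM=kJ$, hence $M$ commutes with $A$, and symmetrically with $B$. Being symmetric and commuting, $M$ and $A$ share an orthonormal eigenbasis; this is exactly the pairing that lets the eigenvalue $\alpha_i=\frac{k-b-\theta_i^2}{b-a}$ of $A$ in Theorem~\ref{children_spec} be attached to the eigenvalue $\theta_i$ of $M$ on the same eigenvector. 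The child $H$ is one of the two complementary graphs $G_A,G_B$; I would carry out the case $H=G_A$, for which the displayed formula in~(ii) is literally correct, and note that $H=G_B$ is handled identically after replacing part~(i) of Theorem~\ref{children_spec} by part~(ii), with the roles of $a$ and $b$ interchanged.

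For part~(i), since $H$ is strongly regular of valency $k_H$, the Perron eigenvalue of its adjacency matrix equals $k_H$; by Theorem~\ref{children_spec} this is $\alpha=\frac{b(n-1)-k(k-1)}{b-a}$ when $H=G_A$ and $\beta=\frac{a(n-1)-k(k-1)}{a-b}=\frac{k(k-1)-a(n-1)}{b-a}$ when $H=G_B$, the two listed values. For part~(ii), the restricted eigenvalues of $A$ are only $r$ and $s$, while Theorem~\ref{children_spec} forces every restricted eigenvalue of $A$ to equal $\alpha_i=\frac{k-b-\theta_i^2}{b-a}$ for some $i\ge2$; hence for each $i\ge2$ the value $\frac{k-b-\theta_i^2}{b-a}$ equals $r$ or $s$. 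Solving the two equations for $\theta_i$ gives $\theta_i=\pm\sqrt{k-b-r(b-a)}=\rho_{1,2}$ or $\theta_i=\pm\sqrt{k-b-s(b-a)}=\sigma_{1,2}$, so every restricted eigenvalue of $G$ lies in $\{\rho_1,\rho_2,\sigma_1,\sigma_2\}$; the phrase ``at most four'' absorbs coincidences, for instance a vanishing radicand collapsing $\rho_1=\rho_2$.

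Part~(iii) is the multiplicity bookkeeping of this same correspondence, and it is the step I would treat most carefully. Using the common eigenbasis, the $r$-eigenspace of $A$, of dimension $f$, is spanned precisely by those shared eigenvectors on which $M$ has eigenvalue $\rho_1$ or $\rho_2$; since $M$ preserves each eigenspace of $A$, which is where commutativity is essential, that space splits as the orthogonal sum of the $\rho_1$- and $\rho_2$-eigenspaces of $M$, of dimensions $f_1$ and $f_2$, whence $f=f_1+f_2$. The identical argument on the $s$-eigenspace yields $g=g_1+g_2$. The only genuine obstacle is exactly this splitting: because $\rho_2=-\rho_1$ and $\sigma_2=-\sigma_1$ both map to a single eigenvalue of $A$, a careless count would give only $f\le f_1+f_2$, and it is the simultaneous diagonalisation of $M$ and $A$ that upgrades this to the stated equalities. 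Everything else is direct substitution into Theorem~\ref{children_spec}.
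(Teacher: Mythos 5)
Your proof is correct and takes essentially the same route as the paper: the paper gives no separate argument for this theorem, presenting it as a straightforward corollary of Theorem~\ref{children_spec}, and your commuting-matrices/simultaneous-diagonalisation argument simply makes explicit the eigenvalue pairing $\theta_i \leftrightarrow \alpha_i$ that Theorem~\ref{children_spec} already encodes through its indexing. Your case split between $H=G_A$ and $H=G_B$ in part (i), and the eigenspace decomposition giving $f=f_1+f_2$, $g=g_1+g_2$ in part (iii), are exactly the intended bookkeeping.
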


Note that Theorem~\ref{spec1} is a generalization of a result obtained for divisible design graphs in~\cite{HKM}. The next theorem follows from Theorem~\ref{spec1}.

\begin{theorem}\label{spec2}
Let $M$ be the adjacency matrix of a Deza graph with strongly regular children. If $N$ is the adjacency matrix of one of its children and the equality $M^2=N^2$ holds, then $k=\alpha$, $\rho_{1,2} =\pm r $ and  $\sigma_{1,2}=\pm s$.
\end{theorem}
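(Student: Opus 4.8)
The plan is to read the three conclusions off directly from the Deza identity~(\ref{child}), $M^{2}=aA+bB+kI$, combined with the hypothesis $M^{2}=N^{2}$ and the strong regularity of $N$, by testing both matrices on the all-ones vector $\mathbf{1}$ and on its orthogonal complement $\mathbf{1}^{\perp}$. Since $A+B+I=J$, the children $G_A$ and $G_B$ are complementary graphs, so as soon as one of them is strongly regular so is the other; I would run the argument for $N=A$, in which case $H=G_A$ has valency $\alpha$ and restricted eigenvalues $r,s$, and note that $N=B$ is handled identically after interchanging $(a,\alpha)$ with $(b,\beta)$.

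For the first conclusion I would simply compare the action on $\mathbf{1}$. As $M$ is $k$-regular, $M^{2}\mathbf{1}=k^{2}\mathbf{1}$, while $A\mathbf{1}=\alpha\mathbf{1}$ gives $N^{2}\mathbf{1}=A^{2}\mathbf{1}=\alpha^{2}\mathbf{1}$. The equality $M^{2}=N^{2}$ then forces $k^{2}=\alpha^{2}$, and since both quantities are non-negative, $k=\alpha$.

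For the remaining two conclusions I would restrict to $\mathbf{1}^{\perp}$, which is invariant under $M$, $A$, and $J$, and on which $J$ acts as $0$. Substituting $B=J-I-A$ into~(\ref{child}) and discarding the $J$-term shows that on $\mathbf{1}^{\perp}$ the identity $M^{2}=N^{2}$ becomes \[A^{2}=(a-b)A+(k-b)I.\] Applying this to an eigenvector of $A$ belonging to the restricted eigenvalue $r$ (which exists since its multiplicity $f\geq 1$) gives $r^{2}=(a-b)r+(k-b)=k-b-r(b-a)$, and likewise $s^{2}=k-b-s(b-a)$. By Theorem~\ref{spec1}(ii) the right-hand sides are precisely $\rho^{2}$ and $\sigma^{2}$, so $\rho^{2}=r^{2}$ and $\sigma^{2}=s^{2}$. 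Because $\rho,\sigma\geq 0$ while $r\geq 0\geq s$, taking square roots yields $\rho=r$ and $\sigma=-s$, that is, $\rho_{1,2}=\pm r$ and $\sigma_{1,2}=\pm s$.

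The calculation is short, so the only real care needed is the bookkeeping of which child is $N$: testing on $\mathbf{1}$ always identifies $k$ with the valency of the strongly regular child, so the precise statement $k=\alpha$ is tied to the choice $N=A$, whereas $N=B$ would give $k=\beta$ together with the corresponding eigenvalue identities. I would therefore flag this convention explicitly at the start, after which the three displayed equalities follow verbatim.
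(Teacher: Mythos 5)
Your proof is correct and takes essentially the paper's route: the paper offers no separate argument for this theorem (it states only that it ``follows from Theorem~\ref{spec1}''), and your computation --- comparing valencies via the action on $\mathbf{1}$, deriving the quadratic relation $A^2=(a-b)A+(k-b)I$ on $\mathbf{1}^{\perp}$ from the Deza identity, and matching the result with the formulas of Theorem~\ref{spec1}(ii) --- is precisely that derivation made explicit. Your closing remark about the $N=A$ versus $N=B$ convention correctly identifies and resolves a genuine looseness in the paper's statement of the conclusion $k=\alpha$.
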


\section{Generalisation of the dual Seidel switching}

Let $M$ be the adjacency matrix of a strongly regular graph $G$ with parameters $(n,k,\lambda,\mu)$ such that 
\begin{equation}\label{M}
  M= \left(\begin{array}{cc} M_{11} & M_{12}\\ M_{21}& M_{22}\\ \end{array}\right)      
\end{equation}
for some submatrices $M_{11}, M_{12},M_{21},M_{22}$ of $M$, where $M_{11}$ be the adjacency matrix of an induced subgraph $H$ of $G$. 

Let us consider the permutation matrix
\begin{equation}\label{P}
P = \left(\begin{array}{cc} P_{11} & 0\\ 0 & I\\ \end{array}\right),
\end{equation}
where $P_{11}$ is the permutation matrix corresponding to a Seidel automorphism $\varphi$ of $H$, and the matrix
\begin{equation}\label{N}
PMP=\left(\begin{array}{cc} M_{11} & P_{11}M_{12}\\ M_{21}P_{11} & M_{22}\\ \end{array}\right).
\end{equation}

Below we assume that $M$ and $P$ are defined by~{\rm (\ref{M})} and~{\rm (\ref{P})}, respectively.  It is clear that $P^2=I$.

\begin{lemma}\label{MM}
Let $G$ be a strongly regular graph with the adjacency matrix $M$, and $H$ be its induced subgraph with the adjacency matrix $M_{11}$. If there exists an automorphism of $H$ with the permutation matrix $P_{11}$, then 
\begin{equation}\label{PMMP=MM}
P_{11}M_{12}M_{21}P_{11} = M_{12}M_{21}.
\end{equation}
\end{lemma}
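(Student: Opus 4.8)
The plan is to exploit the strong regularity of $G$ to rewrite the product $M_{12}M_{21}$ in terms of matrices that live entirely on the vertex set of $H$, and only afterwards to bring in the fact that $P_{11}$ represents an automorphism of $H$. Since $P_{11}$ is an automorphism of $H$ but \emph{not} assumed to be an automorphism of the ambient graph $G$, one cannot argue directly with $M_{12}$ or $M_{21}$; the strongly regular structure of $G$ is precisely what lets us eliminate them.

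First I would apply~(\ref{M^2}), that is $M^2 = kI + \lambda M + \mu(J-I-M)$, and read off the top-left block relative to the partition in~(\ref{M}). On the left-hand side this block equals $M_{11}^2 + M_{12}M_{21}$, while on the right-hand side it equals $kI_1 + \lambda M_{11} + \mu(J_1 - I_1 - M_{11})$, where $I_1$ and $J_1$ denote the identity and all-ones matrices indexed by the vertices of $H$. Solving for the off-diagonal contribution gives
$$M_{12}M_{21} = kI_1 + \lambda M_{11} + \mu(J_1 - I_1 - M_{11}) - M_{11}^2,$$
so that $M_{12}M_{21}$ is expressed as a polynomial in $M_{11}$, $I_1$ and $J_1$ alone.

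Next I would observe that $P_{11}$ commutes with every matrix on the right-hand side. It commutes with $M_{11}$ because $P_{11}$ is the permutation matrix of an automorphism of $H$, whence $P_{11}M_{11}=M_{11}P_{11}$; it commutes with $I_1$ trivially; and it commutes with $J_1$ because any permutation matrix fixes the all-ones matrix, $P_{11}J_1 = J_1 = J_1P_{11}$. Consequently $P_{11}$ commutes with the whole expression, giving $P_{11}M_{12}M_{21} = M_{12}M_{21}P_{11}$. Multiplying this identity on the right by $P_{11}$ and using that the Seidel automorphism $\varphi$ is involutive, so that $P_{11}^2 = I_1$, yields $P_{11}M_{12}M_{21}P_{11} = M_{12}M_{21}P_{11}^2 = M_{12}M_{21}$, which is exactly~(\ref{PMMP=MM}).

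The main point to get right is the block extraction in the first step — in particular recognising that the top-left block of $J$ is precisely $J_1$ and that $M^2$ contributes $M_{11}^2 + M_{12}M_{21}$ there — together with the use of involutivity of $P_{11}$ in the final step; once the product $M_{12}M_{21}$ has been reduced to matrices supported on $H$, the commuting relations are immediate and carry no real difficulty.
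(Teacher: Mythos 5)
Your proof is correct, but it takes a genuinely different route from the paper. The paper argues combinatorially, entry by entry: the $(u_1,u_2)$ entry of $M_{12}M_{21}$ counts common neighbours of $u_1,u_2$ lying outside $H$; since $\varphi$ preserves common-neighbour counts inside $H$ (being an automorphism of $H$) and preserves common-neighbour counts in $G$ (because $H$ is induced, so $\varphi$ preserves adjacency in $G$, and $G$ is strongly regular), the difference of the two counts is also $\varphi$-invariant, which is exactly~(\ref{PMMP=MM}). Your argument is the matrix-level incarnation of the same idea: you extract the top-left block of the identity $M^2 = kI + \lambda M + \mu(J-I-M)$ to write $M_{12}M_{21}$ as a polynomial in $M_{11}$, $I_1$, $J_1$, and then observe that $P_{11}$ commutes with each ingredient. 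What your version buys: it treats all entries uniformly, including the diagonal ones, which the paper's proof silently omits (it only considers \emph{distinct} $u_1,u_2$; the diagonal case needs the separate remark that $k-\deg_H(u)$ is $\varphi$-invariant); and it makes explicit exactly where involutivity of the Seidel automorphism enters, namely in passing from the commutation relation $P_{11}M_{12}M_{21}=M_{12}M_{21}P_{11}$ to the conjugation form $P_{11}M_{12}M_{21}P_{11}=M_{12}M_{21}$ via $P_{11}^2=I_1$ --- a step the paper also needs but does not flag. What the paper's version buys: it is more elementary (no use of the parameters $\lambda,\mu$ beyond the defining property that common-neighbour counts in $G$ depend only on adjacency) and it supplies the combinatorial interpretation of the entries of $M_{12}M_{21}$, which is then reused in Remark~\ref{rem1} and in the verification of the examples.
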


\proof Let us consider an element of matrix $M_{12}M_{21}$ indexed by two distinct vertices $u_1,u_2 \in V(H)$. Then, for an automorphism $\varphi$ of $H$, we have $$|N_{H}(u_1)\cap N_{H}(u_2)|=|N_{H}(\varphi(u_1))\cap N_{H}(\varphi(u_2))|.$$
Since $u_1$ is adjacent to $u_2$ in $G$ if and only if $\varphi(u_1)$ is adjacent to $\varphi(u_2)$ in $G$, and $G$ is a strongly regular graph, we have $$|N_{G}(u_1)\cap N_{G}(u_2)|=|N_{G}(\varphi(u_1))\cap N_{G}(\varphi(u_2))|.$$

Therefore,
$$\big |\big (N_{G}(u_1)\setminus N_{H}(u_1)\big)\cap\big(N_{G}(u_2)\setminus N_{H}(u_2)\big)\big |=$$ $$=\big|\big(N_{G}(\varphi(u_1))\setminus N_{H}(\varphi(u_1))\big)\cap \big(N_{G}(\varphi(u_2))\setminus N_{H}(\varphi(u_2))\big)\big|,$$
which gives~(\ref{PMMP=MM}) and completes the proof. \hfill \hfill $\square$
\medskip 

\begin{theorem}  The matrix $PMP$ is the adjacency matrix of a strongly regular graph with the same parameters $(n,k,\lambda,\mu)$ as a strongly regular graph with the adjacency matrix $M$.
\end{theorem}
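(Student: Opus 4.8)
The goal is to show that $PMP$ is again the adjacency matrix of a strongly regular graph with the same parameters $(n,k,\lambda,\mu)$.

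My plan is to verify the matrix identity characterising strongly regular graphs, namely equation~(\ref{M^2}), for $PMP$ in place of $M$. The strongly regular condition is equivalent to
\begin{equation*}
(PMP)^2 = kI + \lambda (PMP) + \mu(J - I - PMP),
\end{equation*}
together with the observations that $PMP$ is a symmetric $(0,1)$-matrix with zero diagonal (so it is the adjacency matrix of \emph{some} graph) and that this graph is $k$-regular.

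The first routine step is to confirm that $PMP$ is symmetric (since $P^2=I$, $P^\top=P$, and $M$ is symmetric, we get $(PMP)^\top = PMP$) and that conjugation by the permutation matrix $P$ preserves the property of being a $(0,1)$-matrix with zero diagonal; regularity is immediate because $PMP\,\mathbf{1}=PMP\,\mathbf{1}$ and $P\mathbf 1 = \mathbf 1$, so each row sum remains $k$. Next, because $P$ is a permutation matrix, $PJP=J$ and $PIP=I$; hence conjugating equation~(\ref{M^2}) by $P$ gives
\begin{equation*}
P M^2 P = kI + \lambda (PMP) + \mu(J - I - PMP).
\end{equation*}
So it suffices to prove the key identity $PM^2P=(PMP)^2$, i.e. $P M^2 P = P M P^2 M P = P M (P M P)$; after cancelling the outer $P$'s this reduces to showing $M P M = P M^2$, equivalently $M^2 P = P M^2$, i.e. that $P$ commutes with $M^2$.

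The heart of the argument, and the step I expect to be the main obstacle, is therefore establishing $P M^2 = M^2 P$. Here I would exploit the block structure~(\ref{M})--(\ref{N}). Writing $M^2$ in $2\times 2$ block form, the diagonal blocks are $M_{11}^2 + M_{12}M_{21}$ and $M_{21}M_{12}+M_{22}^2$, and the off-diagonal blocks are $M_{11}M_{12}+M_{12}M_{22}$ and $M_{21}M_{11}+M_{22}M_{21}$. Commuting with $P=\mathrm{diag}(P_{11},I)$ amounts to checking four block equations; the nontrivial ones reduce to $P_{11}(M_{11}^2+M_{12}M_{21})=(M_{11}^2+M_{12}M_{21})P_{11}$ and the appropriate intertwining of the off-diagonal blocks. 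Since $\varphi$ is an automorphism of $H$, $P_{11}$ commutes with $M_{11}$ and hence with $M_{11}^2$; Lemma~\ref{MM} supplies exactly $P_{11}M_{12}M_{21}P_{11}=M_{12}M_{21}$, which (using $P_{11}^2=I$) gives $P_{11}M_{12}M_{21}=M_{12}M_{21}P_{11}$, settling the diagonal block. The remaining blocks follow from the strongly regular identity itself: because $M^2=kI+\lambda M+\mu(J-I-M)$, the off-diagonal block $M_{11}M_{12}+M_{12}M_{22}$ equals $\lambda M_{12}+\mu(J_{12}-M_{12})$, a linear combination of $M_{12}$ and the all-ones block, and conjugating such an expression by $P_{11}$ on the left interacts cleanly with the corresponding expression for $PMP$. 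I would assemble these block computations to conclude $P M^2 = M^2 P$, whence $(PMP)^2 = PM^2P$ satisfies~(\ref{M^2}) with the same $k,\lambda,\mu$, completing the proof.
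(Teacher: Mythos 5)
Your overall strategy---check that $PMP$ is a symmetric $(0,1)$-matrix with zero diagonal and then verify the identity~(\ref{M^2}) for $PMP$ by conjugating it by $P$---is exactly the paper's strategy, and your computation $PM^2P=kI+\lambda PMP+\mu(J-I-PMP)$ is correct. However, the step you yourself call the heart of the argument contains a genuine error. The identity $(PMP)^2=PM^2P$ requires no reduction at all: since the Seidel automorphism is an involution, $P^2=I$, and hence
$$(PMP)^2=PMP\cdot PMP=PM(P^2)MP=PM^2P.$$
Your chain $PM^2P=PMP^2MP=PM(PMP)$ silently drops a factor of $P$ (the middle expression has six factors, the right-hand one only five), and the subsequent ``cancellation'' lands you on the claim that $P$ commutes with $M^2$. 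That claim is not equivalent to what you need: given $P^2=I$, the commutation $PM^2=M^2P$ is equivalent to $(PMP)^2=M^2$, which is a strictly stronger statement---it is precisely the kind of conclusion that Theorems~\ref{main11} and~\ref{main12} obtain only under \emph{additional} hypotheses on $P_{11}$, $M_{12}$, $M_{22}$, none of which are assumed here.

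Worse, the commutation you set out to prove is false in general. By~(\ref{M^2}), $M^2=kI+(\lambda-\mu)M+\mu(J-I)$, and $P$ commutes with $I$ and $J$, so $PM^2=M^2P$ holds if and only if $(\lambda-\mu)(PM-MP)=0$, i.e.\ if and only if $\lambda=\mu$ or $\varphi$ extended by the identity is an automorphism of the whole graph $G$. Neither holds in the paper's Example~1: take $G=L_2(5)$ (so $\lambda=3\neq 2=\mu$), $H=L_2(3)$ and $\varphi$ the diagonal symmetry; for the vertices $x=(1,2)$, $\varphi(x)=(2,1)$ of $H$ and $v=(1,5)$ outside $H$, we have $x$ adjacent to $v$ but $\varphi(x)$ not adjacent to $v$, so $P_{11}M_{12}\neq M_{12}$. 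This is exactly where your block computation breaks down: the top-right block of $M^2$ equals $(\lambda-\mu)M_{12}+\mu J_{12}$ (with $J_{12}$ the all-ones block), and left multiplication by $P_{11}$ fixes $J_{12}$ but not $M_{12}$, so that block equation forces $\lambda=\mu$ or $P_{11}M_{12}=M_{12}$. Your diagonal-block argument (commutation of $P_{11}$ with $M_{11}^2$, plus Lemma~\ref{MM}) is fine but cannot be completed to full commutation. The repair is simple: delete the reduction, obtain $(PMP)^2=PM^2P$ directly from $P^2=I$, and then your conjugation of~(\ref{M^2}) finishes the proof---this is precisely the paper's argument, and for this theorem not even Lemma~\ref{MM} or any block analysis is needed.
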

\proof To show that the matrix $PMP$ corresponds to a strongly regular graph, we need to consider the matrix $(PMP)^2$. By~(\ref{PMMP=MM}) we have
$$(PMP)^2=PM^2P=P\big(kI+\lambda M +\mu (J-I-M)\big)P=$$ $$ =kI+\lambda PMP +\mu (J-I-PMP).$$
By~(\ref{M^2}), this completes the proof. \hfill $\square$

\medskip

The following theorems generalise the dual Seidel switching.

\begin{theorem}~{\rm{\bf (Generalised dual Seidel switching 1)}}\label{main11}\\
Let $G$ be a strongly regular graph with the adjacency matrix $M$, and $H$ be its induced subgraph with the adjacency matrix $M_{11}$. If there exists a Seidel automorphism of $H$ with the permutation matrix $P_{11}$ such that  $P_{11}M_{12}M_{22}=M_{12}M_{22}$, then matrices
$$N_1= \left(\begin{array}{cc} P_{11}M_{11} & M_{12}\\ M_{21}& M_{22}\\ \end{array}\right),\, \, 
\mbox{and} \, \, N_2= \left(\begin{array}{cc} P_{11}M_{11} & P_{11}M_{12}\\ M_{21}P_{11} & M_{22}\\ \end{array}\right)$$
are the adjacency matrices of Deza graphs with strongly regular children. Moreover, $N_2^2=M^{2}$ and $N_1^2=(PMP)^2$.
\end{theorem}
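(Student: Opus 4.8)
The plan is to show that each $N_i$ is the adjacency matrix of a Deza graph by verifying that $N_i^2$ has the form $a A_i + b B_i + kI$ as in~(\ref{child}), and to identify its children with those of a strongly regular graph. The natural strategy is to relate $N_1$ and $N_2$ to the matrices $M$ and $PMP$, whose squares we already understand from~(\ref{M^2}) and the preceding theorem. First I would treat $N_2$, which looks closest to the fully symmetrised matrix $PMP$ from~(\ref{N}): the only difference is the top-left block, $P_{11}M_{11}$ in $N_2$ versus $M_{11}$ in $PMP$. So I would write $N_2 = QM$ or $N_2 = PMP\cdot R$ for a suitable block-diagonal permutation matrix and compute $N_2^2$ directly from its block form, aiming to show $N_2^2 = M^2$. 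Computing the four blocks of $N_2^2$, the diagonal blocks are $(P_{11}M_{11})^2 + P_{11}M_{12}M_{21}P_{11}$ and $M_{21}P_{11}P_{11}M_{12} + M_{22}^2 = M_{21}M_{12} + M_{22}^2$; here Lemma~\ref{MM} gives $P_{11}M_{12}M_{21}P_{11} = M_{12}M_{21}$, and since $\varphi$ is a Seidel automorphism on $H$ the term $(P_{11}M_{11})^2$ must be shown to equal $M_{11}^2$, which is exactly the content of the original dual Seidel switching applied to $H$. The off-diagonal blocks involve $P_{11}M_{11}P_{11}M_{12} + P_{11}M_{12}M_{22}$ and its transpose, and here the hypothesis $P_{11}M_{12}M_{22} = M_{12}M_{22}$ (together with $P_{11}M_{11}P_{11}=M_{11}$) is what collapses these to the corresponding blocks of $M^2$. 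Matching all four blocks yields $N_2^2 = M^2$.

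Next I would handle $N_1$, which replaces only the top-left block $M_{11}$ by $P_{11}M_{11}$ while leaving $M_{12}, M_{21}, M_{22}$ untouched. The claim is $N_1^2 = (PMP)^2$, so I would compute $(PMP)^2$ in block form from~(\ref{N}) and compare block-by-block with $N_1^2$. The diagonal blocks match by the same Seidel-switching identity $(P_{11}M_{11})^2 = M_{11}^2$ and Lemma~\ref{MM}; the delicate comparison is again in the off-diagonal blocks, where the hypothesis $P_{11}M_{12}M_{22} = M_{12}M_{22}$ is used to reconcile the cross terms $P_{11}M_{11}M_{12}$ against $M_{11}P_{11}M_{12}$ and the mixed products arising from $PMP$. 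I expect the algebra here to mirror the $N_2$ case with $M$ replaced by $PMP$ throughout.

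Having the two square identities $N_2^2 = M^2$ and $N_1^2 = (PMP)^2$, the Deza property follows immediately: since $M$ is strongly regular, $M^2 = kI + \lambda M + \mu(J-I-M)$ by~(\ref{M^2}), so $N_2^2 = \mu J + (\lambda-\mu)M + (k-\mu)I$, which is of the form~(\ref{child}) with the two common-neighbour values $\{\mu, \lambda\}$ (after the appropriate identification $a=\mu$, $b=\lambda$ or vice versa according to $b\geq a$), and whose children are precisely $M$ and $J-I-M$, i.e.\ the strongly regular graph $G$ and its complement. The same argument applied to $PMP$, which is strongly regular with the same parameters by the preceding theorem, shows $N_1$ is a Deza graph with strongly regular children. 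The main obstacle will be the careful block computation of the off-diagonal entries of $N_1^2$ and $N_2^2$ and confirming that the hypothesis $P_{11}M_{12}M_{22} = M_{12}M_{22}$, combined with the Seidel-automorphism property of $\varphi$, is exactly strong enough to force every cross term into agreement; I would organise this as an explicit $2\times 2$ block expansion and check the four resulting equalities one at a time.
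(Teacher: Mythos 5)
Your proposal is correct and follows essentially the same route as the paper: an explicit $2\times 2$ block expansion of $N_1^2$, $N_2^2$, $M^2$, and $(PMP)^2$, matching blocks via Lemma~\ref{MM}, the automorphism relation $P_{11}M_{11}P_{11}=M_{11}$, and the hypothesis $P_{11}M_{12}M_{22}=M_{12}M_{22}$ (with its transpose), then invoking strong regularity of $M$ and of $PMP$ to identify the children. The only cosmetic difference is that you attribute $(P_{11}M_{11})^2=M_{11}^2$ to the original dual Seidel switching applied to $H$, whereas it follows directly from $P_{11}M_{11}P_{11}=M_{11}$ (the automorphism property alone, since $H$ need not be strongly regular), which you already use elsewhere.
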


\begin{theorem}~{\rm{\bf (Generalised dual Seidel switching 2)}}\label{main12}\\
Let $G$ be a Deza graph with strongly regular children and the adjacency matrix $M$, and $H$ be its induced subgraph with the adjacency matrix $M_{11}$. If there exists a Seidel automorphism of $H$ with the permutation matrix $P_{11}$ such that $P_{11}M_{11}M_{12}=M_{11}M_{12}$, then matrices
$$N_1= \left(\begin{array}{cc} P_{11}M_{11} & M_{12}\\ M_{21}& M_{22}\\ \end{array}\right),\, \, 
\mbox{and} \, \, N_2= \left(\begin{array}{cc} P_{11}M_{11} & P_{11}M_{12}\\ M_{21}P_{11} & M_{22}\\ \end{array}\right)$$
are the adjacency matrices of Deza graphs with strongly regular children. Moreover, $N_1^2=M^{2}$ and $N_2^2=(PMP)^2$.
\end{theorem}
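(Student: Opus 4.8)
The plan is to reduce both assertions to routine block-matrix computations, the real content being concentrated in three properties of the Seidel automorphism together with the single hypothesis $P_{11}M_{11}M_{12}=M_{11}M_{12}$. First I would record that, since $\varphi$ is an involutive automorphism of $H$ interchanging only non-adjacent vertices, $P_{11}^{2}=I$ and $P_{11}M_{11}P_{11}=M_{11}$, equivalently $P_{11}M_{11}=M_{11}P_{11}$; moreover $P_{11}M_{11}$ is a symmetric $(0,1)$-matrix with zero diagonal, since $(P_{11}M_{11})^{T}=M_{11}P_{11}=P_{11}M_{11}$ and its $(i,i)$-entry equals $(M_{11})_{\varphi(i),i}=0$ because $i$ and $\varphi(i)$ are non-adjacent. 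Using $M_{21}=M_{12}^{T}$ and $(P_{11}M_{12})^{T}=M_{21}P_{11}$, this shows at once that $N_1$ and $N_2$ are symmetric $(0,1)$-matrices with zero diagonal, hence adjacency matrices of graphs.

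Next I would compute $N_1^{2}$ blockwise and compare with $M^{2}$. The $(1,1)$ block matches because $P_{11}M_{11}P_{11}M_{11}=M_{11}^{2}$, and the $(2,2)$ block is unchanged. The $(1,2)$ block reduces exactly to the hypothesis $P_{11}M_{11}M_{12}=M_{11}M_{12}$, while the $(2,1)$ block reduces to $M_{21}P_{11}M_{11}=M_{21}M_{11}$, which I would obtain from the transpose $M_{21}M_{11}P_{11}=M_{21}M_{11}$ of the hypothesis together with $P_{11}M_{11}=M_{11}P_{11}$. Hence $N_1^{2}=M^{2}$. As $G$ is a Deza graph with strongly regular children we may write $M^{2}=aA+bB+kI$ with $A+B+I=J$ and $A,B$ adjacency matrices of strongly regular graphs; since $N_1^{2}$ has identical entries, its diagonal is the constant $k$ (so $N_1$ is $k$-regular) and the same decomposition shows that $N_1$ is a Deza graph whose children are $G_A$ and $G_B$, hence strongly regular.

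For $N_2$ I would first note the algebraic identity $(PMP)^{2}=PM^{2}P$, valid because $P^{2}=I$, and then verify $N_2^{2}=PM^{2}P$ by an analogous blockwise expansion: the diagonal blocks agree after applying $P_{11}M_{11}P_{11}=M_{11}$ and $P_{11}^{2}=I$, and the cross blocks reduce to the hypothesis in the forms $M_{11}P_{11}M_{12}=M_{11}M_{12}$ and its transpose. Writing $PM^{2}P=a(PAP)+b(PBP)+kI$, I would observe that $PAP$ and $PBP$ are the adjacency matrices of graphs isomorphic to $G_A$ and $G_B$, that they are again symmetric $(0,1)$-matrices with $PAP+PBP+I=P(A+B+I)P=J$, and that conjugation by the permutation $P$ carries off-diagonal entries to off-diagonal entries. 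Thus $N_2$ is a $k$-regular Deza graph whose children $PAP,PBP$ are isomorphic copies of the strongly regular children of $G$, hence strongly regular.

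The main difficulty is bookkeeping rather than conceptual: one must correctly extract the cross-block identities from the single hypothesis by combining it with the commutation relation $P_{11}M_{11}=M_{11}P_{11}$ and transposition, noting that the identity needed for the $(2,1)$ block is exactly the adjoint of the stated hypothesis, so no additional assumption is required. The only point demanding care is the final identification of the children: one must check that the $A/B$ decompositions of $N_1^{2}$ and $N_2^{2}$ are the canonical ones determined by the two off-diagonal values $a$ and $b$, so that the children are literally $G_A,G_B$ for $N_1$ and their relabellings for $N_2$, which preserves strong regularity.
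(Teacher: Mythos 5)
Your proposal is correct and takes essentially the same route as the paper's proof: a blockwise expansion of $N_1^2$ and $N_2^2$ compared against $M^2$ and $(PMP)^2 = PM^2P$, with the diagonal blocks handled via $P_{11}M_{11}P_{11}=M_{11}$ and the cross blocks via the hypothesis $P_{11}M_{11}M_{12}=M_{11}M_{12}$, its transpose, and the commutation $P_{11}M_{11}=M_{11}P_{11}$. Your only departures are added explicitness --- checking that $N_1,N_2$ are symmetric $(0,1)$-matrices with zero diagonal and identifying the children as $A,B$ for $N_1$ and $PAP,PBP$ for $N_2$ --- together with the (correct) observation that the relation $P_{11}M_{12}M_{21}P_{11}=M_{12}M_{21}$ from the paper's Lemma~1 is never needed for this theorem, since the terms $M_{12}M_{21}$ and $P_{11}M_{12}M_{21}P_{11}$ occur identically on both sides of the respective comparisons.
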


\begin{proof}
To proof both theorems we calculate the following matrices:
\vspace{5mm}\\
\vspace{3mm}
\noindent $M^2 = \left(\begin{array}{c|c} M_{11}^2 + M_{12}M_{21} & M_{11}M_{12}+M_{12}M_{22}\\ 
\hline
\noindent M_{21} M_{11}+M_{22}M_{21} & M_{21} M_{12} + M_{22}^2\\ \end{array}\right),$\\
\vspace{3mm}
$(PMP)^2 = \left(\begin{array}{c|c} M_{11}^2 + P_{11}M_{12}M_{21}P_{11} & M_{11}P_{11}M_{12}+P_{11}M_{12}M_{22}\\ 
\hline
M_{21}P_{11} M_{11}+M_{22}M_{21}P_{11} & M_{21}P_{11}P_{11} M_{12} + M_{22}^2\\ \end{array}\right),$\\
\vspace{3mm}
$N_1^2 = \left(\begin{array}{c|c} P_{11}M_{11}P_{11}M_{11} + M_{12}M_{21} & P_{11}M_{11}M_{12}+M_{12}M_{22}\\
\hline
M_{21}P_{11} M_{11}+M_{22}M_{21} & M_{21} M_{12} + M_{22}^2\\ \end{array}\right),$\\
\vspace{3mm}
$N_2^2 = \left(\begin{array}{c|c} P_{11}M_{11}P_{11}M_{11} + P_{11}M_{12}M_{21}P_{11} & P_{11}M_{11}P_{11}M_{12}+P_{11}M_{12}M_{22}\\ 
\hline
M_{21}P_{11}P_{11} M_{11}+M_{22}M_{21}P_{11} & M_{21}P_{11}P_{11} M_{12} + M_{22}^2\\ \end{array}\right).$\\

By Lemma~\ref{MM}, the equation $P_{11}M_{12}M_{21}P_{11}=M_{12}M_{21}$ holds. If $P_{11}M_{12}M_{22}=M_{12}M_{22}$, then comparing the matrices $N_2^2$ and $M^{2}$, or $N_1^2$ and $(PMP)^2$ we have~Theorem~\ref{main11}. 

If $P_{11}M_{11}M_{12}=M_{11}M_{12}$, then comparing the matrices $N_1^2$ and $M^{2}$, or $N_2^2$ and $(PMP)^2$, we have~Theorem~\ref{main12}.
\hfill $\square$ \end{proof} \\


\begin{remark}\label{rem1} The combinatorial meaning of the matrix conditions are as follows. \\

\noindent $(1)$ Condition $P_{11}M_{12}M_{22}=M_{12}M_{22}$ from Theorem~\ref{main11} means that for any $v \in V(G)\setminus V(H)$ and for any $x,y \in V(H)$ such that $\varphi(x)=y$, the number of common neighbours for $v$ and $x$ in $G\setminus H$ is equal to the number of common neighbours for $v$ and $y$ in $G\setminus H$;\\

\noindent $(2)$ Condition $P_{11}M_{11}M_{12}=M_{11}M_{12}$ from Theorem~\ref{main12} means that for any $v \in V(G)\setminus V(H)$ and for any $x,y \in  V(H)$ such that $\varphi(x)=y$, the number of common neighbours for $v$ and $x$ in $H$ is equal to the number of common neighbours for $v$ and $y$ in $H$.
\end{remark}

\begin{remark}\label{rem3} If $G=H$ then $N_1=N_2$ and we have the dual Seidel switching from Theorem~\ref{dual}.
\end{remark}

\begin{remark}\label{rem4} If $G$ is a strongly regular graph meeting conditions of Theorem~\ref{main12}, then the theorem holds as well. Moreover, in both Theorems~\ref{main11} and~\ref{main12}, $H$ is a strictly Deza graph if and only if $\lambda \neq 0$ and $\mu \neq 0$.
\end{remark}

\begin{remark} If $G$ is a strongly regular graph meeting conditions of both Theorems~\ref{main11} and~\ref{main12}, then $\lambda=\mu$. 
\end{remark}

Next three examples illustrate Theorem~\ref{main11}. \\

\noindent {\bf Example~1.} The line graph of the complete bipartite graph $K_{m,m}$, where $m \geqslant 2$, is called the {\em lattice graph} $L_2(m)$ with parameters $(m^2,2(m-1),m-2,2)$. 

The lattice graph $G=L_2(m)$, where $m \geqslant 5$,  has the induced subgraph $H=L_2(m-2)$. For odd $m$, there is one Seidel automorphism of $H$ corresponding to the main diagonal lattice symmetry. For even $m$, there are two Seidel automorphisms of $H$ corresponding to the main diagonal lattice symmetry and to the central symmetry. Thus, we have the induced subgraph $H$ with the Seidel automorphism of a strongly regular graph $G$. 

Now let us check the condition of Theorem~\ref{main11}. It is enough to verify that the combinatorial condition (1) of Remark~\ref{rem1} holds. Indeed, for any $v \in V(G)\setminus V(H)$ and for any $x,y \in V(H)$ such that $\varphi(x)=y$, there is the only common neighbour for $v$ and $x$, and for $v$ and $y$ in $G\setminus H$.

Hence, by Theorem~\ref{main11}, there are two non-isomorphic Deza graphs with parameters $(m^2,2(m-1),m-2,2)$ whose strongly regular children are $L_2(m)$ and its compliment. By Theorem~\ref{spec2}, its distinct eigenvalues belong to the set $\{2(m-1),\pm (m-2),\pm 2\}$. \\

\noindent {\bf Example~2.} Let $K_n$ be the complete graph with $n$ vertices on the set $\{1,2,\dots,n\}$, $n\geqslant 2$. The line graph of $K_n$ is called the {\em triangular graph} $T(n)$ with parameters $(n(n-1)/2,2(n-2),n-2,4)$.

Let $n\geqslant 8$ is even. The triangular graph $G=T(n)$ has the induced subgraph $H=T(n-2)$ defined as the line graph of $K_{n-2}$ on the set $\{1,2,\dots,n-2\}$. The vertices of $G$ can be viewed as $2$-subsets of the set $\{1,2,\dots,n\}$. There is a Seidel automorphism of $H$ defined by $\varphi(\{i,j\})=\{n-i-2,n-j-2\}$. 

Now let us check the condition of Theorem~\ref{main11}. If $v=\{n,n-1\}$, then $v$ doesn't have neighbours in $H$, but any vertex from $H$ has four neighbours with $v$. If $v=\{k,l\}$, where $k\in \{1,2,\dots,n-2\}$, $l\in \{n,n-1\}$, is not from $H$, and vertices $x=\{i,j\}$, $y=\varphi (x)$ are from $H$, then out of $H$ a vertex $v$ has exactly one common neighbour with $x$ and exactly one common neighbour with $y$. Thus, condition  of Theorem~\ref{main11} holds, and we have  two non-isomorphic Deza graphs with parameters $(n(n-1)/2,2(n-2),n-2,4)$ for any even integer $n$. Children of this Deza graph are the triangular graph and its complement. By Theorem~\ref{spec2}, all distinct eigenvalues of this Deza graph belong to the set $\{2(n-2),\pm (n-4),\pm 2\}$. \\

\noindent {\bf Example~3.} Let $G$ be the triangular graph $T(7)$. There is an induced subgraph $H = L_2(3)$ in $G$ 
with the diagonal symmetry under conditions of Theorem~\ref{main11}. The matrices $N_1$ and $N_2$ give two non-isomorphic Deza graphs with the same parameters $(21,10,5,4)$ whose spectra are  $\{10^1,3^4,2^3,-2^{11},-3^2\}$ and $\{10^1,3^2,2^6,-2^8,-3^4\}$, where the exponents denote multiplicities.\\

Let $G$ be the triangular graph $T(8)$. There is an induced subgraph $H = L_2(4)$ in $G$ with the diagonal and the central symmetries 
under conditions of Theorem~\ref{main11}. In each of the cases the matrices $N_1$ and $N_2$ correspond to non-isomorphic Deza graphs with the same parameters $(28,12,6,4)$ whose eigenvalues belong to the set $\{12,\pm 4,\pm 2\}$.\\

Next two examples illustrate Theorem~\ref{main12}. \\

\noindent {\bf Example~4.} The lattice graph $G=L_2(m)$ has an induced subgraph $H=K_2\times K_m$. By Remark~\ref{rem4}, Theorem~\ref{main12} holds for $G$ and $H$. For any even $m \geqslant 6$, there is a Seidel automorphism $\varphi$ of $H$ corresponding to the central symmetry of the lattice with two rows and $m$ columns. Thus, we have the induced subgraph $H$ with the Seidel automorphism of a strongly regular graph $G$. The subgraph $H$ meets condition of Theorem~\ref{main12} since the combinatorial condition (2) of Remark~\ref{rem1} holds. Indeed, for any $v \in V(G)\setminus V(H)$ and for any $x,y \in V(H)$ such that $\varphi(x)=y$, there is the only neighbour in $H$ for both pairs $v$ and $x$, and $v$ and $y$. Hence, by Theorem~\ref{main12}, for $m \geqslant 6$ there are two non-isomorphic Deza graphs with parameters $(m^2,2(m-1),m-2,2)$ whose strongly regular children are $L_2(m)$ and its complement. 

The resulting graph $G_1$, which is a strictly Deza graph, has an induced subgraph $H_1 = K_{m-2}\times K_m$ defined on the set $V(G_1)\setminus V(H)$. There is a subgraph $H_2 =K_2\times K_m$ of $H_1$ meeting conditions of Theorem~\ref{main12} for $G_1$ so that a new Deza graph $G_2$ is obtained. Continuing this process, $m/2$ new non-isomorphic Deza graphs are obtained with the same parameters. By Theorem~\ref{spec2}, all distinct eigenvalues of these Deza graphs belong to the set $\{2(m-1),\pm (m-2),\pm 2\}$. \\

\noindent {\bf Example~5.}  Let $G=T(n)$ be the triangular graph for some even $n\geqslant 8$. It has an induced subgraph $H=K_2\times K_{n-2}$ on the neighbourhood $N(w)$ of a vertex $w \in V(G)\setminus V(H)$. This subgraph $H$ has a Seidel automorphism $\varphi$ corresponding to the central symmetry of the lattice. Let us show that the combinatorial condition (2) of Remark~\ref{rem1} holds for $H$. For any $v \in V(G)\setminus V(H)$ and for any two non-adjacent vertices $x,y \in V(H)$ such that $\varphi(x)=y$ consider the following cases. If $v=w$ then the condition holds since $H$ is a regular graph. If $v\neq w$ then there are four subcases: 
\begin{itemize}
    \item $x,y \in N(v) \cap N(w)$; 
    \item $x,y \notin N(v) \cap N(w)$;
    \item $x \in N(v) \cap N(w)$ and $y \notin N(v) \cap N(w)$;
    \item $y \in N(v) \cap N(w)$ and $x \notin N(v) \cap N(w)$,
\end{itemize}
each of which gives $|N(x) \cap N(v) \cap N(w)|=|N(y) \cap N(v) \cap N(w)|=2$. Thus, by Theorem~\ref{main12} we have two non-isomorphic Deza graphs with parameters $(n(n-1)/2,2(n-2),n-2,4)$ for any even integer $n$. Children of this Deza graph are the triangular graph and its complement. By Theorem~\ref{spec2}, all distinct eigenvalues of this Deza graph belong to the set $\{2(n-2),\pm (n-4),\pm 2\}$. 

\section{More constructions}

In this section we present other matrix-based constructions of Deza graphs with strongly regular children. 

\begin{theorem}\label{main2}
Let $M$ be the adjacency matrix of a strongly regular graph $G$ with parameters $(n,k,\lambda,\mu)$ with $\lambda=\mu$. If there exists a fixed point free Seidel automorphism of $G$ and its permutation matrix is $P$, then the matrix $M+P$ is the adjacency matrix of a Deza graph with strongly regular children whose adjacency matrices are $PM$ and $(J-I-PM)$.
\end{theorem}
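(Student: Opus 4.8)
The plan is to verify directly that $M+P$ satisfies the matrix characterisation~(\ref{child}) of a Deza graph, taking the two symmetric $(0,1)$-matrices to be $PM$ and $J-I-PM$, and then to recognise both of these as adjacency matrices of strongly regular graphs. The algebra is short once one exploits $\lambda=\mu$ together with the facts that $P^2=I$ and that $P$ commutes with $M$; the genuinely delicate part is checking that the three matrices involved really are adjacency matrices of simple graphs.

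First I would confirm this validity, which is exactly where the hypotheses enter. Since $\varphi$ is a Seidel automorphism, $P_{ij}=1$ forces $M_{ij}=0$, so $M$ and $P$ have disjoint supports and $M+P$ is a $(0,1)$-matrix; it is symmetric because $P$ is, and it has zero diagonal because $\varphi$ is fixed point free. As $P$ is a permutation matrix, $PM$ is automatically $(0,1)$; it is symmetric because $P$ represents an automorphism, so $PM=MP$ and hence $(PM)^{\top}=M^{\top}P^{\top}=MP=PM$; and its $i$-th diagonal entry equals $M_{\varphi(i),i}=0$ by the Seidel and fixed-point-free properties. Regularity of $M+P$ (degree $k+1$) and of $PM$ (degree $k$) follows from row sums, and connectivity of $M+P$ is inherited from $G$ since $M+P\geqslant M$ entrywise.

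The core is then the algebraic identity. Because $\lambda=\mu$, equation~(\ref{M^2}) collapses to $M^2=(k-\lambda)I+\lambda J$. Using $P^2=I$ and $PM=MP$ I would expand
\[
(M+P)^2 = M^2 + 2PM + I = (k+1-\lambda)I + \lambda J + 2PM,
\]
and substitute $J = I + PM + (J-I-PM)$ to obtain
\[
(M+P)^2 = (k+1)I + (\lambda+2)\,PM + \lambda\,(J-I-PM).
\]
This is precisely~(\ref{child}) for $M+P$ with $a=\lambda$, $b=\lambda+2$, $A=J-I-PM$, $B=PM$, and $A+B+I=J$, so $M+P$ is a Deza graph with parameters $(n,k+1,\lambda+2,\lambda)$ whose children have adjacency matrices $PM$ and $J-I-PM$.

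Finally I would show these children are strongly regular. Grouping the product gives $(PM)^2 = P(MP)M = P(PM)M = P^2M^2 = M^2 = (k-\lambda)I+\lambda J = kI+\lambda\,PM+\lambda\,(J-I-PM)$, which is exactly the strong-regularity equation~(\ref{M^2}) for the $k$-regular graph $PM$ with parameters $(n,k,\lambda,\lambda)$; its complement $J-I-PM$ is then strongly regular as well. I expect no real obstacle in the computation itself: once $\lambda=\mu$ reduces $M^2$ to a combination of $I$ and $J$ and the automorphism yields $PM=MP$, the Deza structure and the strong regularity of the children are forced. The one point that demands care is the entrywise bookkeeping of the second paragraph, namely verifying that $M+P$, $PM$ and $J-I-PM$ are all adjacency matrices of simple graphs, since this is precisely where the "Seidel" and "fixed point free" hypotheses are indispensable.
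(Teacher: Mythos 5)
Your proof is correct and follows essentially the same route as the paper's: both exploit $\lambda=\mu$ to collapse $M^2$ to $kI+\lambda(J-I)$, use $PM=MP$ and $P^2=I$ to expand $(M+P)^2$, and arrive at the same decomposition $(k+1)I+(\lambda+2)PM+\lambda(J-I-PM)$ with children $PM$ and $J-I-PM$. Your verification that $M+P$ and $PM$ are genuine adjacency matrices (disjoint supports, symmetry, zero diagonal) is in fact more explicit than the paper's, which asserts these points briefly, but this is a matter of detail rather than of method.
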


\proof Since $G$ is a strongly regular graph with parameters $(n,k,\lambda,\mu)$, where $\lambda=\mu$, then by~(\ref{M^2}) we have:
$$M^2=kI+\lambda M +\lambda(J-I-M)=kI+\lambda (J-I).$$ 
Since $P$ is the permutation matrix of a Seidel automorphism of $G$ and $\lambda=\mu$, then $PM$ is the adjacency matrix of a strongly regular graph. Moreover, the equation $$M^2=(PM)^2$$ holds. Since $P$ has no ones on the main diagonal, then the automorphism has no 
fixed points. So, $M+P$ is the adjacency matrix of a graph. Thus, we have: $$(M+P)^2=M^2+2PM+P^2=(PM)^2+2PM+P^2=kI +\lambda(J-I)+2PM+I=$$ $$=(k+1)I+\lambda PM+2PM+\lambda(J-I)-\lambda PM=$$ $$=(k+1)I+(\lambda+2)PM+\lambda(J-I-PM),$$ which gives a Deza graph with strongly regular children whose adjacency matrices are $PM$ and $(J-I-PM)$. This completes the proof. \hfill $\square$\\
 
\noindent {\bf Example~5.} Consider the lattice graph $L_2(4)$ with parameters $(16,6,2,2)$. It has a fixed point free Seidel automorphism corresponding to the central symmetry of the lattice (see Example~1). Let $P$ be its permutation matrix and $M$ be the adjacency matrix of $L_2(4)$. Then by Theorem~\ref{main2}, matrix $P+M$ is the adjacency matrix of a Deza graph with parameters $(16,7,4,2)$. \\

\noindent {\bf Example~6.} Consider the Clebsch graph $G$ with parameters $(16,10,6,6)$ whose adjacency matrix is $M$. It has a fixed point free Seidel automorphism. If $P$ is its permutation matrix, then by Theorem~\ref{main2}, matrix $P+M$ is the adjacency matrix of a Deza graph with parameters $(16, 11,8,6)$. 

\begin{theorem}\label{main1}
Let $M$ be the adjacency matrix of a strongly regular graph $G$ with parameters $(n,k,\lambda,\mu)$. If there exists a fixed point free Seidel automorphism of $G$ and its permutation matrix is $P$, then the matrix $P(M+I)$ is the adjacency matrix of a Deza graph with valency $k+1$, parameters $\{a,b\}=\{\lambda+2,\mu\}$ and with strongly regular children whose adjacency matrices are $M$ and $J-I-M$. 
\end{theorem}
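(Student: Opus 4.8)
The plan is to follow the template of the proof of Theorem~\ref{main2}, exploiting the two defining features of a fixed point free Seidel automorphism $\varphi$ separately. Recall that $\varphi$ being an automorphism of $G$ is equivalent to $PM=MP$ (using $P^{T}=P=P^{-1}$, which holds because $\varphi$ is an involution), while the Seidel property together with fixed point freeness says that each vertex $i$ is non-adjacent to its image $\varphi(i)\neq i$.

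First I would check that $P(M+I)=PM+P$ really is the adjacency matrix of a simple graph. The $(0,1)$-entry condition is immediate, since left multiplication by the permutation matrix $P$ merely permutes the rows of the $(0,1)$-matrix $M+I$; for the same reason every row sum equals the common row sum $k+1$ of $M+I$, giving valency $k+1$. For symmetry I would use the commutation $PM=MP$ to write $P(M+I)=(M+I)P$, whence $(P(M+I))^{T}=(M+I)^{T}P^{T}=(M+I)P=P(M+I)$. For the zero diagonal I would observe that the $(i,i)$-entry of $P(M+I)$ equals the $(\varphi(i),i)$-entry of $M+I$; since $\varphi(i)\neq i$ this is just $M_{\varphi(i),i}$, which vanishes because $i$ and $\varphi(i)$ are non-adjacent. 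This is the one step where the Seidel hypothesis, rather than mere automorphy, is essential, and it is the main thing to get right.

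Once $P(M+I)$ is known to be an adjacency matrix, the square computes cleanly: using $P(M+I)=(M+I)P$ and $P^{2}=I$ gives
$$(P(M+I))^{2}=(M+I)P\cdot P(M+I)=(M+I)P^{2}(M+I)=(M+I)^{2}=M^{2}+2M+I.$$
Substituting the strongly regular relation~(\ref{M^2}) and regrouping then yields
$$(P(M+I))^{2}=(k+1)I+(\lambda+2)M+\mu(J-I-M).$$

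Finally I would read off the Deza structure by comparing with~(\ref{child}). Setting $A=M$ and $B=J-I-M$ (or vice versa), we have the required decomposition $A+B+I=J$ into complementary symmetric $(0,1)$-matrices with zero diagonal, and the displayed identity exhibits the form $aA+bB+kI$ with valency $k+1$ and $\{a,b\}=\{\lambda+2,\mu\}$. Hence $P(M+I)$ is a Deza graph whose children have adjacency matrices $M$ and $J-I-M$; these are $G$ and its complement, both strongly regular, as claimed. I expect no real obstacle beyond the careful verification that the diagonal vanishes and that symmetry holds, both of which hinge on the precise definition of a fixed point free Seidel automorphism.
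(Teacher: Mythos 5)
Your proof is correct and follows essentially the same route as the paper: both reduce $(P(M+I))^2$ to $(M+I)^2=M^2+2M+I$ via the automorphism relation $PMP=M$ (equivalently $PM=MP$) together with $P^2=I$, and then substitute the strongly regular identity~(\ref{M^2}) to read off the Deza structure with children $M$ and $J-I-M$. The only difference is that you spell out the verification that $P(M+I)$ is a genuine adjacency matrix (symmetry via commutation, zero diagonal via fixed point freeness and the Seidel property), which the paper compresses into the single remark that $P$ has no ones on its main diagonal.
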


\proof Since $P$ is the permutation matrix of a Seidel automorphism of $G$, we have:
$$(P(M+I))^2 = P(M+I)P(M+I)=(PMP+PIP)(M+I)=(M+I)^2=M^2+2M+I^2.$$
Since $P$ has no ones on the main diagonal, then the automorphism has no fixed points, and $P(M+I)$ is the adjacency matrix of a graph. Thus, by~(\ref{M^2}) we have:
$$(P(M+I))^2 = (k+1)I + (\lambda +2) M +\mu(J-I-M),$$ 
which gives a Deza graph with valency $k+1$, parameters $\{a,b\}=\{\lambda+2,\mu\}$ and with strongly regular children whose adjacency matrices are $M$ and $J-I-M$. This completes the proof. \hfill $\square$\\

\noindent {\bf Example~7.} Consider the lattice graph $L_2(n)$, where $n$ is even, with parameters $(n^2,2(n-1),n-2,2)$. As it was shown in Example~1, central symmetry of $L_2(n)$ can be considered as a fixed point free Seidel automorphism $\varphi$. Thus, if $P$ is the  permutation matrix of $\varphi$, and $M$ is the adjacency matrix of $L_2(n)$, then by Theorem~\ref{main1}, matrix $P(M+I)$ is the adjacency matrix of a Deza graph with parameters $(n^2,2n-1,n,2)$. 

\section*{Acknowledgements} 
The research was funded by RFBR according to the research project 17-51-560008. The work is supported by Mathematical Center in Akademgorodok under agreement No. 075-15-2019-1613 with the Ministry of Science and Higher Education of the Russian Federation. The third author is supported by RFBR according to the research project 20-51-53023. The main results of the paper were obtained during workshops on Algebraic Graph Theory held on November 5-8, 2019, and on March 14-17, 2020, Akademgorodok, Novosibirsk, Russia.

\end{document}